\newcommand{\Q}{\mathbb{Q}}
\newcommand{\Z}{\mathbb{Z}}
\newcommand{\N}{\mathbb{N}}
\newcommand{\C}{\mathbb{C}}
\newcommand{\R}{\mathbb{R}}
\newcommand\set[1]{\left\{#1\right\}}
\newcommand\br[1]{\left(#1\right)}
\newcommand\abs[1]{\left|#1\right|}
\newcommand\norm[1]{\lVert#1\rVert}
\newtheorem{theorem}{Theorem}
\newtheorem{lemma}{Lemma}
\newtheorem{remark}{Remark}
\newtheorem{proposition}{Proposition}
\DeclareMathOperator{\Hom}{Hom}
\DeclareMathOperator{\rank}{rank}
\begin{document}

    \title{Twisted Thue  equations with multiple exponents in fixed number fields}
	
	\author{
		Tobias Hilgart \small tobias.hilgart@plus.ac.at, \\
        Volker Ziegler \small volker.ziegler@plus.ac.at, \\ 
		University of Salzburg (Austria) \\			
	}
	
	\maketitle
	
	\begin{abstract}
		Let $K$ be a number field of degree $d\geq 3$ and fix $s$ multiplicatively independent $\gamma_1, \dots, \gamma_s \in K^*$ that fulfil some technical requirements, which can be vastly simplified to $\Q$-linearly independence, given Schanuel's conjecture. We then consider the twisted Thue equation
		\[
			\abs{N_{K/\Q}\br{X-\gamma_1^{t_1}\cdots\gamma_s^{t_s}Y}} = 1,
		\]
		and prove that it has only finitely many solutions $\br{x,y, \br{t_1, \dots, t_s} }$ with $xy \neq 0$ and $\Q\br{ \gamma_1^{t_1}\cdots \gamma_s^{t_s} } = K$, all of which are effectively computable.
	\end{abstract}

    \section{Introduction}

        One of the first non-binary parametrised Thue equations to ever be solved was 
        \[
            X^3-\br{a-1}X^2Y-\br{a+2}XY^2-Y^3 = \pm 1,
        \]
        done so by Thomas \cite{Tho90}. If we express the polynomial equation in terms of its roots $\alpha_1, \alpha_2, \alpha_3$, which of course depend on the parameter $a$, then we can write this equivalently as
        \[
            \br{X-\alpha_1 Y}\br{X - \alpha_2 Y}\br{X - \alpha_3 Y} = \pm 1,
        \]
        or $\abs{ N_{K/\Q}\br{ X- \alpha_1 Y } } = 1$, where $K = \Q\br{\alpha_1}$. This result was extended by Levesque and Waldschmidt \cite{LeWa15} by twisting the equation by an exponential parameter $t$ and showed that $\abs{ N_{K/\Q}\br{X-\alpha_1^t Y} } = 1$ still only has finitely many integer solutions in $x,y,a,t$ where $\abs{y} > 1$. They conjectured that this still holds, if the Thue equation is twisted by two exponents, i.e. $\abs{ N_{K/\Q}\br{X-\alpha_1^t\alpha_2^s Y} } = 1$, and it is reasonable to expect that if it works for two, it might work for finitely many -- at least given sufficiently "nice" conditions.

        In a similar but different vein, Levesque and Waldschmidt \cite{LeWa13} also showed the following: Let $K = \Q\br{\alpha}$, with embeddings $\Phi = \set{ \Tilde{\sigma}_1, \dots, \Tilde{\sigma}_d }$ into $\C$ and $0 < \nu < 1$. Furthermore, let $\varepsilon \in \Z_K$ be an integral unit that fulfils $\Q\br{\alpha \varepsilon} = K$ and 
        \begin{equation}\label{cond:wald-lev}
            \abs{ \Tilde{\sigma}_1(\alpha\varepsilon) } = \max_i\br{ \abs{ \Tilde{\sigma}_i(\alpha \varepsilon) } }, \; \abs{ \Tilde{\sigma}_2(\alpha \varepsilon) } \geq \max_i\br{ \abs{ \Tilde{\sigma}_i(\alpha\varepsilon) } }^\nu 
        \end{equation}
        for two distinct embeddings $\Tilde{\sigma}_1,\Tilde{\sigma}_2 \in \Phi$. Then for any solution $\br{x,y,\varepsilon}$ of the inequality
        \[
            \abs{ N_{K / \Q}\br{ x - \alpha\varepsilon \, y } } \leq m
        \]
        the logarithms $\log\abs{x}, \log\abs{y}$, as well as the absolute logarithmic height $h(\alpha \varepsilon)$ can be bounded by $m^c$ for some effective constant $c$. They conjectured that the result would hold even without Condition \eqref{cond:wald-lev}. Proof of the finiteness of the number of solutions is already established in \cite{LeWa12} but rests on Schmidt's subspace theorem which does not allow for an effective upper bound for their heights.

        For our result, we also fix the number field. The "base" elements of the norm-form equation are fixed as well but can be chosen somewhat more freely. Our main result is as follows:
        
        \begin{theorem}\label{thm:main}
            Let $K$ be a number field of degree $d \geq 3$ and $s \leq d-2$. Let $\gamma_1, \dots, \gamma_s \in K^*$ be multiplicatively independent algebraic integers such that for each choice of $d-1$ embeddings $\Tilde{\sigma}_1, \dots, \Tilde{\sigma}_{d-1} \in \Hom_\Q\br{K, \C}$, we have
            \begin{equation}\label{eq:requirement}
                \rank
                \begin{pmatrix}
                    \log\abs{ \frac{ \Tilde{\sigma}_1(\gamma_1) }{ \Tilde{\sigma}_{d-1}(\gamma_1) } } & \cdots & \log\abs{ \frac{ \Tilde{\sigma}_1(\gamma_s) }{ \Tilde{\sigma}_{d-1}(\gamma_s) } } \\
                    \vdots & \ddots & \vdots \\
                    \log\abs{ \frac{ \Tilde{\sigma}_{d-2}(\gamma_1) }{ \Tilde{\sigma}_{d-1}(\gamma_1) } } & \cdots & \log\abs{ \frac{ \Tilde{\sigma}_{d-2}(\gamma_s) }{ \Tilde{\sigma}_{d-1}(\gamma_s) } }
                \end{pmatrix}
                = s, \tag{$*$}
            \end{equation}
            i.e. the matrix has full column rank $s$. Then the Thue equation
            \begin{equation}\label{eq:main}
                \abs{N_{K/\Q}\br{X-\gamma_1^{t_1}\cdots\gamma_s^{t_s}Y}} = 1    
            \end{equation}
            has only finitely many integer solutions $\br{x, y, \br{t_1, \dots, t_s}} \in \Z^2 \times \N^s$, where $xy \neq 0$ and $\Q\br{ \gamma_1^{t_1}\cdots \gamma_s^{t_s} } = K$, all of which can be effectively computed.
        \end{theorem}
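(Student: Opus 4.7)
The plan is to follow the classical Baker-method approach to effective Thue-type equations: factor \eqref{eq:main} over the embeddings of $K$, reduce via Siegel's identity to a linear form in logarithms of a fixed set of algebraic numbers, and play an exponentially small upper bound against a polynomial Baker lower bound. The rank hypothesis \eqref{eq:requirement} enters at the very end, translating a bound on the height of $\alpha := \gamma_1^{t_1}\cdots\gamma_s^{t_s}$ into individual bounds on each exponent.

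Concretely, enumerate $\Hom_\Q(K,\C) = \set{\Tilde{\sigma}_1,\dots,\Tilde{\sigma}_d}$, set $\alpha_i := \Tilde{\sigma}_i(\alpha)$, $\beta_i := x - \alpha_i y$, and observe that \eqref{eq:main} reads $\prod_{i=1}^d \beta_i = \pm 1$, so $\beta_1 \in \mathcal{O}_K^\times$. After reordering, assume $\abs{\beta_1}$ is smallest; pigeonhole then gives the standard estimate $\abs{\beta_1} \ll \abs{y}^{-(d-1)}$ up to constants depending on the $\alpha_i$. Apply Dirichlet's unit theorem to write $\beta_1 = \zeta \eta_1^{n_1}\cdots\eta_r^{n_r}$ in a fixed system of fundamental units of $\mathcal{O}_K$, so that each $\beta_i$ is a known monomial in the $\Tilde{\sigma}_i(\eta_l)$ up to a root of unity. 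Applying Siegel's identity
\[
\br{\alpha_j-\alpha_k}\beta_1 + \br{\alpha_k-\alpha_1}\beta_j + \br{\alpha_1-\alpha_j}\beta_k = 0
\]
to a triple $(1,j,k)$ with $j,k\geq 2$ and dividing yields
\[
-1 + \frac{\br{\alpha_1-\alpha_j}\beta_k}{\br{\alpha_k-\alpha_1}\beta_j} = \frac{\br{\alpha_j-\alpha_k}\beta_1}{\br{\alpha_k-\alpha_1}\beta_j},
\]
whose right-hand side is exponentially small in $T := \max\br{\log\abs{y}, t_1,\dots,t_s, \abs{n_1},\dots,\abs{n_r}}$. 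Approximating each factor $\alpha_1-\alpha_j$ by its dominant summand $\alpha_{i^*}$ and taking logarithms, one arrives at a linear form
\[
\Lambda = \sum_l n_l \log\frac{\Tilde{\sigma}_k(\eta_l)}{\Tilde{\sigma}_j(\eta_l)} + \sum_l t_l \log\frac{\Tilde{\sigma}_{i^*}(\gamma_l)}{\Tilde{\sigma}_{j^*}(\gamma_l)} + c
\]
in logarithms of the fixed algebraic numbers $\Tilde{\sigma}_\cdot(\eta_l),\, \Tilde{\sigma}_\cdot(\gamma_l)$ (with $c$ absorbing a root of unity) that still satisfies $\abs{\Lambda} \leq \exp(-c' T)$ for some effective $c' > 0$.

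Baker's theorem then produces a polynomial lower bound $\abs{\Lambda} \geq T^{-C}$ whenever $\Lambda \neq 0$, and comparing the two bounds forces $T$ to be absolutely bounded. Non-vanishing of $\Lambda$ follows because a vanishing linear form would yield a multiplicative relation among the $\Tilde{\sigma}_\cdot(\eta_l)$ and $\Tilde{\sigma}_\cdot(\gamma_l)$, contradicting either the Dirichlet independence of the units or the rank condition \eqref{eq:requirement} for the $\gamma_l$. Once $T$ is bounded, condition \eqref{eq:requirement}---which is exactly the statement that the logarithmic embedding of $\langle\gamma_1,\dots,\gamma_s\rangle$ at the archimedean places of $K$ has full rank $s$---forces each $\abs{t_l}$ to be bounded as well, leaving finitely many candidates for $\alpha$; for each of these, \eqref{eq:main} is an ordinary effective Thue equation in the fixed field $K$. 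The main obstacle I expect is making the approximation $\alpha_1 - \alpha_j \approx \alpha_{i^*}$ uniform in $(t_1,\dots,t_s)$: this requires a case analysis over which of $\abs{\Tilde{\sigma}_1(\gamma_l)}$ and $\abs{\Tilde{\sigma}_j(\gamma_l)}$ dominates for each $l$, together with a verification that the rank hypothesis rules out the degenerate configurations in which the approximation is only polynomially tight and $\Lambda$ accidentally vanishes.
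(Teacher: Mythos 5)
Your outline follows the same broad strategy as the paper---factor over the embeddings, pick out the conjugate with $\abs{\beta_j}$ smallest, reduce via Siegel's identity to a linear form in logarithms, and beat a polynomial Baker lower bound with an exponentially small upper bound. The rank hypothesis~\eqref{eq:requirement} indeed enters at the end, as you anticipate. But the place you flag as ``the main obstacle I expect'' is not a loose end to be tied up later; it is the actual content of the proof, and your sketch does not resolve it.

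Two specific gaps. First, the ``degenerate configurations in which the approximation is only polynomially tight'' form a full dichotomy in the paper (Case~2): when all but one of the $\sigma_i$ satisfy $\log\abs{\sigma_i/\sigma_j}\ll\log t$, the Siegel-identity machinery does \emph{not} produce an exponentially small $L$, and no amount of massaging will make it; in that regime one does not run Baker at all, but instead reads the system $\Gamma(t_1,\dots,t_s)^\top\ll(\log t,\dots,\log t)^\top$ directly against the rank condition to get $t\ll\log t$ and hence $t\ll 1$. Your proposal never makes this case distinction, and without it the claimed bound $\abs{\Lambda}\leq e^{-c'T}$ is simply false for solutions of type $j$ falling into Case~2. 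Second, your assertion that non-vanishing of $\Lambda$ ``follows because a vanishing linear form would yield a multiplicative relation\ldots contradicting either the Dirichlet independence of the units or the rank condition'' is not correct: $\Lambda=0$ mixes unit logarithms and $\gamma$-logarithms across \emph{different} embeddings, so no single independence statement kills it. The paper has to split $\Lambda=0$ into four subcases depending on which of $\abs{\sigma_j},\abs{\sigma_k},\abs{\sigma_l}$ dominates, show two are impossible (via $\Q(\gamma_1^{t_1}\cdots\gamma_s^{t_s})=K$), and in the other two re-feed Siegel's identity to produce a \emph{new} nonzero linear form (this time involving $\log 2$) that Baker can handle. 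Beyond these, your $T$ bundles $\log\abs{y}$ and the unit exponents $\abs{n_l}$ together with the $t_l$, but one still needs Bugeaud--Győry to bound $\log\abs{x},\log\abs{y}$ (and hence the $n_l$) in terms of $t$ before the Baker comparison closes---this step, together with a Tijdeman-type lower bound for $\abs{\sigma_i-\sigma_j}$ and a max-product estimate, is what makes the exponent bookkeeping in Case~1 actually work, and none of it appears in your sketch.
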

        \begin{remark}
            Given Schanuel's conjecture \cite{Lan66}, that for any $\Q$-linearly independent complex numbers $z_1, \dots, z_n$ the field extension $\Q\br{z_1,\dots, z_n, e^{z_1}, \dots, e^{z_n}}$ has transcendence degree at least $n$ over $\Q$, our condition on the $\gamma_i$ can be relaxed in the following way:
    
            Instead of requiring the full rank of the given matrices, it suffices that $\gamma_1, \dots, \gamma_s$ are $\Q$-linearly independent. See section 4 for more details.
        \end{remark}

        The outline of the proof is that we follow the standard Baker's method to construct a linear form in logarithms and use lower bounds for linear forms in logarithms \cite{BaWh93} to derive an absolute upper bound for $\abs{t_1}, \dots, \abs{t_s}$. Using Bugeaud's and Gy\H{o}ry's explicit upper bound for solutions of Thue equations \cite{BuGy96} allows us to bound $\log\abs{x}, \log\abs{y}$ in terms of $\abs{t_1}, \dots, \abs{t_s}$ and thus absolutely as well. 

        For this to work, however, we need that the embeddings of $\gamma_1^{t_1} \cdots \gamma_s^{t_s}$ are not all "close", asymptotically in terms of $\max_i\abs{t_i}$, to a distinguished embedding, which is given by the type $j$ of the solution. If they are, we need a different argument, for which we require the condition \eqref{eq:requirement} or Schanuel's conjecture.

    \section{Preliminaries}
        We start by listing the results of Baker and Wüstholz \cite{BaWh93}, or Bugeaud and Gy\H{o}ry \cite{BuGy96} respectively, as well as two smaller lemmata, one of which goes back to Tijdeman \cite{Tijd73} and is stated in a slightly different setting.

        For the sake of completeness, we briefly define the absolute (or Weil) height and Mahler's measure, see, for instance, \cite{Bomb82} or \cite{Smyth08}. If $K$ is a number field of degree $d = [K:\Q]$, and for every place $\nu$, we write $d_\nu = [K_\nu : \Q_\nu]$, then we normalise the absolute value $\abs{\cdot}_\nu$ so that
        \begin{enumerate}
            \item if $\nu \vert p$ for a prime number $p$, then $\abs{p}_\nu = p^{-d_\nu / d}$,
            \item if $\nu \vert \infty$ and $\nu$ is real, then $\abs{x}_\nu = \abs{x}^{1/d}$,
            \item if $\nu \vert \infty$ and $\nu$ is complex, then $\abs{x}_\nu = \abs{x}^{2/d}$,
        \end{enumerate}
        and $\abs{x}$ denotes the Euclidian absolute value in $\R$ or $\C$. In view of this normalisation, the product formula
        \[
            \prod_{ \nu } \abs{\alpha}_\nu = 1
        \]
        holds for every $\alpha \in K^*$. The absolute height of $\alpha \in K$ is then defined as
        \[
            H(\alpha) = \prod_\nu \max\br{ 1, \abs{\alpha}_\nu},
        \]
        and the absolute logarithmic height as $h(\alpha) = \log H(\alpha)$. The absolute height is then equal to the Mahler measure $M(m_\alpha)$ of its minimal polynomial $m_\alpha$, i.e. if $m_\alpha(X) = a_n\prod_{i=1}^n \br{X - \alpha_i} \in \Z[X]$ is the minimal polynomial of $\alpha \in K$, then
        \[
            h(\alpha) = \frac{1}{n} \log M(m_\alpha) = \frac{1}{n} \br{ \abs{a_n} + \sum_{i=1}^n \log\max\br{1, \abs{\alpha_i}} }.
        \]

        \begin{proposition}[\cite{BaWh93}]\label{prop:Baker}
            Let $\gamma_1, \dots, \gamma_t$ be algebraic numbers not $0$ or $1$ in $K = \Q\br{\gamma_1,\dots, \gamma_t}$, which is of degree $D$. Let $b_1, \dots, b_t \in \Z$ and
            \[
                \Lambda = b_1 \log\gamma_1 + \cdots + b_t \log\gamma_t \neq 0.
            \]

            Then 
            \[
                \log\abs{ \Lambda } \geq - C \cdot h_1\cdots h_t \cdot \log B,
            \]
            where $C = 18(t+1)!t^{t+1}(32D)^{t+2}\log(2tD)$, $B \geq \max\br{3, \abs{b_1}, \cdots, \abs{b_t} }$ and
            \[
                h_i \geq \max\br{ h(\gamma_i), \log\abs{\gamma_i}\, D^{-1}, 0.16\, D^{-1}  }
            \]
            for $i \in \set{1, \dots, t}$.
        \end{proposition}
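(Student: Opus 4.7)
The plan is to execute the strategy outlined immediately after the theorem: build a linear form in logarithms, apply Proposition~\ref{prop:Baker} to bound $T := \max_i\abs{t_i}$ absolutely, then invoke Bugeaud--Gy\H{o}ry \cite{BuGy96} to bound $\log\abs{x},\log\abs{y}$ effectively. Fix a solution $\br{x,y,\br{t_1,\dots,t_s}}$ of \eqref{eq:main} and set $\beta := \gamma_1^{t_1}\cdots\gamma_s^{t_s}$, so $\Q(\beta) = K$ by hypothesis and $\epsilon := x-\beta y \in \Z_K^\times$. Label the embeddings $K\hookrightarrow\C$ as $\sigma_1,\dots,\sigma_d$ and assign each solution a \emph{type} $j$, the index minimising $\abs{\sigma_j\epsilon}$. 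Since $\sigma_i\epsilon - \sigma_j\epsilon = (\sigma_j\beta - \sigma_i\beta)y$ and $\prod_i\abs{\sigma_i\epsilon} = 1$, standard Thue-type estimates give $\abs{\sigma_j\epsilon}$ exponentially small in $\log\abs{y}$ while $\abs{\sigma_i\epsilon}\asymp\abs{\sigma_i\beta - \sigma_j\beta}\cdot\abs{y}$ for $i\neq j$.

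Pick two further embeddings $\sigma_k,\sigma_\ell$ distinct from $\sigma_j$ and from each other. Siegel's identity applied to $\epsilon$ and three conjugates of $\beta$ yields
\[
    \Lambda = \log\abs{\frac{(\sigma_k\beta - \sigma_j\beta)\,\sigma_\ell\epsilon}{(\sigma_\ell\beta - \sigma_j\beta)\,\sigma_k\epsilon}} = \log\abs{1-\delta},
\]
with $\delta$ exponentially small in $\log\abs{y}$ provided the ratios $\abs{\sigma_j\beta/\sigma_k\beta}$ and $\abs{\sigma_j\beta/\sigma_\ell\beta}$ stay bounded away from $1$. Writing $\epsilon = \zeta\eta_1^{b_1}\cdots\eta_r^{b_r}$ via Dirichlet's unit theorem (fundamental units $\eta_i$ of $\Z_K$, root of unity $\zeta$) with $\max_i\abs{b_i}\ll T+\log\abs{y}$ via the regulator, and decomposing $\sigma_k\beta - \sigma_j\beta = \sigma_k\beta\,(1-\sigma_j\beta/\sigma_k\beta)$ (likewise for $\ell$), the quantity $\Lambda$ rewrites as a $\Z$-linear form
\[
    \Lambda = \sum_{i=1}^r b_i \log\abs{\sigma_\ell\eta_i/\sigma_k\eta_i} + \sum_{i=1}^s t_i \log\abs{\sigma_k\gamma_i/\sigma_\ell\gamma_i} + E,
\]
with $E$ a bounded correction under the same non-degeneracy. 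The heights of the algebraic numbers on the right are fixed absolute constants, so Proposition~\ref{prop:Baker} delivers $\log\abs{\Lambda}\geq -C\log(T+\log\abs{y})$. Confronted with the upper bound $\log\abs{\Lambda}\ll -\log\abs{y}+O(T)$ and with $h(\beta)\leq c\,T$ (so Bugeaud--Gy\H{o}ry controls $\log\abs{y}$ in terms of $T$), one extracts an absolute upper bound on $T$, after which a final invocation of Bugeaud--Gy\H{o}ry bounds $\log\abs{x},\log\abs{y}$ effectively.

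The hard part is the ``close embeddings'' scenario flagged in the outline: if, for a given type $j$, the ratios $\abs{\sigma_i\beta/\sigma_j\beta}\to 1$ for every $i\neq j$ as $T\to\infty$, the decomposition $\sigma_k\beta - \sigma_j\beta = \sigma_k\beta(1-\sigma_j\beta/\sigma_k\beta)$ carries an enormous corrective factor and the linear-form representation of $\Lambda$ degenerates. This is exactly where condition \eqref{eq:requirement} is essential. Observe that the vector $\br{\log\abs{\sigma_i\beta/\sigma_j\beta}}_{i\neq j}$ equals the $(d-1)\times s$ log-ratio matrix (with $\sigma_j$ as reference) applied to $(t_1,\dots,t_s)^\top$; since \eqref{eq:requirement} gives this matrix full column rank $s$ for \emph{every} choice of reference embedding and of $d-2$ further rows, the image has $\ell_\infty$-norm $\gtrsim T$, so some $\sigma_k$ makes $\abs{\log\abs{\sigma_k\beta/\sigma_j\beta}}$ sizeable; repeating with $\sigma_k$ deleted --- where \eqref{eq:requirement} for the reduced system of $d-2$ auxiliary embeddings again provides full rank --- exhibits a second ``good'' $\sigma_\ell$. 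I expect this selection argument, producing two suitable auxiliary embeddings uniformly over the type $j$, to be the key technical heart; everything else follows a well-worn Baker-method script. Schanuel's conjecture would render the required linear independence of the relevant logarithms automatic, which is why the rank hypothesis can be dropped in that setting.
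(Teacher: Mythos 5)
Your write-up does not address the statement at hand. The statement is Proposition~\ref{prop:Baker}, the Baker--W\"ustholz lower bound for linear forms in logarithms, which the paper does not prove at all: it is quoted verbatim from \cite{BaWh93} and used as a black box. What you have written is instead a sketch of the proof of the main theorem (Theorem~\ref{thm:main}) --- type of a solution, Siegel's identity, decomposition into fundamental units, Bugeaud--Gy\H{o}ry, and the rank condition \eqref{eq:requirement} to handle the ``close embeddings'' case. Whatever its merits as an outline of that argument, it cannot stand as a proof of Proposition~\ref{prop:Baker}: you explicitly invoke Proposition~\ref{prop:Baker} inside it, so as a proof of that proposition it would be circular, and nothing in your text engages with what the proposition actually asserts.

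A genuine proof of the stated proposition is a major piece of transcendence theory (auxiliary polynomial construction, extrapolation, zero/multiplicity estimates on linear algebraic groups) and lies far outside the scope of this paper; the correct ``proof'' here, matching the paper, is simply the citation to Baker and W\"ustholz. If your intention was to prove Theorem~\ref{thm:main}, note also that your sketch leaves the key quantitative steps unproved --- in particular the lower bound $\abs{\sigma_j\beta - \sigma_i\beta} \gg \max(\abs{\sigma_i\beta},\abs{\sigma_j\beta}) h(\cdot)^{-c}$ (the paper's Lemma~\ref{lem:tijd}), the product estimate of Lemma~\ref{lem:bound_prod_max}, and the treatment of the degenerate case $\Lambda = 0$ --- but that is a separate matter from the statement you were asked to prove.
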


        \begin{proposition}[\cite{BuGy96}]\label{prop:BuGy}
            Let $B \geq \max\br{ \abs{m}, e }$, $f$ be an irreducible polynomial with root $\alpha$ and $K = \Q\br{\alpha}$. Let $R$ be the regulator of $K$ and $r$ be the unit rank. Let $H$ be an upper bound to the absolute values of the coefficients of $f$ and $n = \deg f \geq 3$. Let $F(X,Y) = Y^n f\br{\frac{X}{Y}}$, then all solutions $(x,y) \in \Z^2$ of the Thue equation $F(X,Y) = m$ satisfy
            \[
                \log\max\br{ \abs{x}, \abs{y} } \leq c \cdot R \cdot \max\br{ \log R, 1 } \br{ R + \log\br{HB} },
            \]
            where $c = 3^{r+27}(r+1)^{7r+19}n^{2n+6r+14}$.
        \end{proposition}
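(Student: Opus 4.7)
The plan follows the Baker-method route the authors outline. Fix a candidate solution $(x,y,(t_1,\ldots,t_s))$ with $xy \neq 0$ and $\Q(\alpha)=K$, where $\alpha := \gamma_1^{t_1}\cdots\gamma_s^{t_s}$, and set $T := \max_m|t_m|$. The aim is an absolute, effective bound on $T$; Proposition~\ref{prop:BuGy} applied to the minimal polynomial of $\alpha$ (whose coefficient height is bounded by $e^{O(T)}$) then gives $\log\max(|x|,|y|) \leq C_1(K)\cdot T$, and the bound on $T$ turns this into an absolute bound, yielding finiteness. The central algebraic object is the unit $\beta := x - \alpha y \in \Z_K^{\times}$ (a unit because $|N_{K/\Q}(\beta)|=1$); fixing a system of fundamental units $\eta_1,\ldots,\eta_r$ of $K$, write $\beta = \zeta \eta_1^{b_1}\cdots\eta_r^{b_r}$. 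The conjugates $\beta_i := \tilde\sigma_i(\beta) = x - \tilde\sigma_i(\alpha)y$ satisfy $\prod_i|\beta_i|=1$, and I pick the \emph{type} $j$ of the solution as the index minimising $|\beta_j|$; inverting the regulator matrix of $K$ expressing $(\log|\beta_i|)_i$ in terms of $(b_l)_l$ yields $\max_l|b_l| \leq C_2(K)\cdot T$.

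The bound on $T$ is obtained by splitting on the sizes $|\tilde\sigma_i(\alpha)/\tilde\sigma_j(\alpha)|$ for $i \neq j$. By condition~\eqref{eq:requirement} (after relabelling $\tilde\sigma_j$ as the reference embedding), the matrix with entries $\log|\tilde\sigma_i(\gamma_m)/\tilde\sigma_j(\gamma_m)|$ has full column rank $s$, whence a constant $c = c(K,\gamma_1,\ldots,\gamma_s) > 0$ exists with $\max_{i \neq j}|\log|\tilde\sigma_i(\alpha)/\tilde\sigma_j(\alpha)|| \geq c\,T$ for large $T$. In the \emph{non-degenerate case} there exist $i,k \neq j$ with both $|\tilde\sigma_i(\alpha)|$ and $|\tilde\sigma_k(\alpha)|$ differing from $|\tilde\sigma_j(\alpha)|$ by factors $\geq e^{c'T}$ (WLOG both $\leq e^{-c'T}|\tilde\sigma_j(\alpha)|$ after swapping the role of the dominant embedding). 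Siegel's identity
\[
\beta_j(\tilde\sigma_i-\tilde\sigma_k)(\alpha) + \beta_i(\tilde\sigma_k-\tilde\sigma_j)(\alpha) + \beta_k(\tilde\sigma_j-\tilde\sigma_i)(\alpha) = 0
\]
rearranges to $A := \beta_i(\tilde\sigma_j-\tilde\sigma_k)(\alpha)/(\beta_k(\tilde\sigma_j-\tilde\sigma_i)(\alpha)) = 1 + \beta_j(\tilde\sigma_k-\tilde\sigma_i)(\alpha)/(\beta_k(\tilde\sigma_j-\tilde\sigma_i)(\alpha))$; factoring each difference as $(\tilde\sigma_j-\tilde\sigma_n)(\alpha) = \tilde\sigma_j(\alpha)(1-\tilde\sigma_n(\alpha)/\tilde\sigma_j(\alpha))$, whose second factor equals $1 + O(e^{-c'T})$, gives
\[
\log|A| = \sum_l b_l \log\left|\frac{\tilde\sigma_i(\eta_l)}{\tilde\sigma_k(\eta_l)}\right| + O(e^{-c'T}) =: \Lambda_0 + O(e^{-c'T}).
\]
The estimate $|A-1| \leq C_3|\beta_j|/|\beta_k|$ together with $\log|\beta_j| \leq -(d-1)\log|y| + O(T)$ and the lower bound $|\beta_k| \geq |y|\,|\tilde\sigma_k(\alpha)-\tilde\sigma_j(\alpha)|/2 \geq |y|e^{c'T}/C_4$ available from non-degeneracy makes $|\Lambda_0|$ exponentially small in $T$. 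Since the arguments $\tilde\sigma_i(\eta_l)/\tilde\sigma_k(\eta_l)$ are fixed algebraic numbers depending only on $K$, Proposition~\ref{prop:Baker} gives the matching lower bound $\log|\Lambda_0| \geq -C_5 \log T$, contradicting the upper bound for $T$ sufficiently large. In the complementary \emph{degenerate case}, where every $|\log|\tilde\sigma_i(\alpha)/\tilde\sigma_j(\alpha)||$ is bounded by $c'T$, inverting the full-rank linear system $\log|\tilde\sigma_i(\alpha)/\tilde\sigma_j(\alpha)| = \sum_m t_m\log|\tilde\sigma_i(\gamma_m)/\tilde\sigma_j(\gamma_m)|$ on $s$ independent rows from \eqref{eq:requirement} directly gives $T \leq C_6\cdot c'T$, bounding $T$ outright when $c'$ is chosen small enough.

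The principal obstacle, which I expect to require the most care, is guaranteeing $\Lambda_0 \neq 0$ so that Proposition~\ref{prop:Baker} is actually applicable. The linear form vanishes trivially when $\tilde\sigma_i$ and $\tilde\sigma_k$ are complex conjugates (forcing all $\log|\tilde\sigma_i(\eta_l)/\tilde\sigma_k(\eta_l)|=0$), and can vanish accidentally when the $b_l$ satisfy a linear relation compatible with these logarithms; treating such configurations requires varying the pair $(i,k)$ — condition \eqref{eq:requirement} provides ample freedom in the non-degenerate case — or replacing the real form $\log|A|$ by the complex form $\log A = \log|A| + i\arg A$. A secondary subtlety is the regime of bounded $|y|$, in which the upper bound $\log|\beta_j| \leq -(d-1)\log|y|+O(T)$ no longer dominates $-C_5\log T$; this can be handled either as a separate base case (for each fixed small $y$, the resulting family of equations in $(x,t_1,\ldots,t_s)$ is amenable to the degenerate-case argument) or by enlarging $\Lambda_0$ to include a $\sum_m t_m\log|\tilde\sigma_j(\gamma_m)/\tilde\sigma_i(\gamma_m)|$ contribution that remains substantially nonzero even when $|y|$ is small. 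Finally, the effective extraction of the constant $c$ from \eqref{eq:requirement} — essentially the minimum singular value of the matrix — must be tracked throughout so the final bound on $T$, and hence on the solution set, is effectively computable.
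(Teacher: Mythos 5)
Your text does not prove the statement in question. The statement is Proposition~\ref{prop:BuGy}, the explicit Bugeaud--Gy\H{o}ry bound $\log\max\br{\abs{x},\abs{y}} \leq c\,R\,\max\br{\log R,1}\br{R+\log(HB)}$ for solutions of an arbitrary Thue equation $F(X,Y)=m$, with $c$ given explicitly in terms of the unit rank and the degree. What you have written instead is an outline of the proof of Theorem~\ref{thm:main} of the paper (the twisted Thue equation with exponents $t_1,\dots,t_s$), and in the very first paragraph you \emph{invoke} Proposition~\ref{prop:BuGy} as a black box to bound $\log\max\br{\abs{x},\abs{y}}$ in terms of $T$. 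If the goal were to prove the proposition, this would be circular; as it stands, the proposal simply addresses a different result and leaves the actual statement untouched.

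For context: the paper offers no proof of this proposition either --- it is quoted verbatim as a known result of Bugeaud and Gy\H{o}ry \cite{BuGy96}, and the paper's contribution lies elsewhere. An actual proof would have to reproduce their argument: factor $F(x,y)=m$ over $K=\Q(\alpha)$, pass to the associated unit equation, choose a system of fundamental units whose heights are controlled by the regulator $R$, and run Baker's method (lower bounds for linear forms in logarithms) with all constants made explicit in $n$, $r$, $R$, $H$ and $B$, so as to arrive at the stated value of $c$. None of these steps appears in your proposal, and the Siegel-identity/type-$j$ analysis you sketch (which mirrors Section~3 of the paper) cannot substitute for them, since it presupposes the very bound to be proved.
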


        The next lemma goes back to a result of Tijdeman \cite{Tijd73} on the difference of consecutive numbers that are comprised of primes up to some given bound. We can adapt the statement to our setting with only small alterations and prove it almost analogously.

        \begin{lemma}\label{lem:tijd}
            Let $K$ be a number field of degree $d \geq s$ and $\gamma_1, \dots, \gamma_s \in K^*$ multiplicatively independent. Let $\gamma = \gamma(t_1, \dots, t_s) = \gamma_1^{t_1} \cdots \gamma_s^{t_s}$ for non-zero integers $t_1, \dots, t_s$.

            Then for any two conjugates $\gamma^{(1)}, \gamma^{(2)}$ of $\gamma$ with $M = \abs{ \gamma^{(1)} } > \abs{ \gamma^{(2)} } = m$ there exists an effectively computable constant $c$ independent of $t_1, \dots, t_s$ such that

            \[
                M - m > \frac{ M }{ h(M)^c }.
            \]
        \end{lemma}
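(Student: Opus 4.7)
The plan is to mimic Tijdeman's original approach \cite{Tijd73} and express $M - m$ as (essentially) the value of a non-zero linear form in logarithms of fixed algebraic numbers, then invoke Proposition~\ref{prop:Baker} and translate back. First I assume $m/M \geq 1/2$, since otherwise $M - m \geq M/2$ and the conclusion is immediate.

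Writing $\gamma^{(j)} = \sigma_j(\gamma) = \prod_i \sigma_j(\gamma_i)^{t_i}$ and setting $\delta_i := \abs{\sigma_2(\gamma_i)/\sigma_1(\gamma_i)}^2 \in \overline{\Q} \cap \R_{>0}$, the $\delta_i$ are fixed algebraic numbers independent of the $t_i$, and
\[
\Lambda := \sum_{i=1}^s t_i \log \delta_i = 2 \log(m/M)
\]
is a non-zero real linear form in logarithms of algebraic numbers (after dropping any index with $\delta_i = 1$, at least one remains because $m/M < 1$). Applying Proposition~\ref{prop:Baker} to $\Lambda$ yields
\[
\abs{\Lambda} \geq B^{-c_1}, \qquad B := \max(3, \abs{t_1}, \dots, \abs{t_s}),
\]
for an effective constant $c_1$ depending only on $\gamma_1, \dots, \gamma_s$ and $K$ (since each $h(\delta_i)$ is bounded in terms of $h(\gamma_i)$). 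Combined with the elementary estimate $-\log x \leq 2(1-x)$ valid for $x \in [1/2,1]$, this gives
\[
\frac{M - m}{M} = 1 - \frac{m}{M} \geq \frac{-\log(m/M)}{2} = \frac{\abs{\Lambda}}{4} \geq \frac{1}{4 B^{c_1}}.
\]

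It remains to bound $B$ polynomially in $h(M)$. Multiplicative independence of the $\gamma_i$ makes the logarithmic embedding $(t_i) \mapsto (\log\abs{\sigma_j(\gamma)})_j$ injective, so $B$ is controlled linearly by a norm of $(\log\abs{\sigma_j(\gamma)})_j$ and hence by $h(\gamma)$; in the opposite direction, $M^2 = \sigma_1(\gamma)\overline{\sigma_1(\gamma)}$ yields the easy bound $h(M) \leq h(\gamma)$. The main obstacle is the reverse polynomial bound $h(\gamma) \ll h(M)^{c_2}$: it hinges on $\sigma_1$ being non-degenerate in the sense that $\abs{\sigma_1(\gamma)}$ actually grows with $(t_i)$ (if, say, $\abs{\sigma_1(\gamma_i)} = 1$ for every $i$, then $M = 1$ identically and $h(M) = 0$, a degeneracy that must be excluded or handled separately). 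Once this reverse bound is in place, absorbing all constants into a single exponent $c = c_1 c_2$ completes the proof.
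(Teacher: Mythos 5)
Your overall strategy matches the paper's: form the real linear form in logarithms of fixed algebraic numbers whose value is $\log(m/M)$, invoke Proposition~\ref{prop:Baker} with coefficients $t_i$, translate $1 - e^{\Lambda}$ back to $M-m$, and finally bound $B = \max|t_i|$ by a height. Splitting off the easy case $m/M < 1/2$ is a small but legitimate streamlining. Two points merit discussion.

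First, the reduction of $B$ to a height. You work only with the archimedean logarithmic embedding $(t_i) \mapsto \bigl(\log\abs{\sigma_j(\gamma)}\bigr)_j$ and assert injectivity from multiplicative independence. That implication is not automatic: it holds when $\gamma = \gamma_1^{t_1}\cdots\gamma_s^{t_s}$ is an algebraic integer (Kronecker), but the lemma allows arbitrary nonzero $t_i \in \Z$ and arbitrary $\gamma_i \in K^*$, in which case a nontrivial power product can have all archimedean absolute values equal to one (e.g. an $S$-unit quotient like $(3+4i)/5$). The paper sidesteps this by embedding into the full $S$-adic logarithm space, including finite places, where multiplicative independence really does force the $(d+|S|)\times s$ matrix $\Gamma$ to have full column rank. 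The $S$-adic picture buys a second thing your sketch leaves implicit: via the product formula $\sum_\nu \log\abs{\gamma}_\nu = 0$, the $\ell_1$ norm of the image vector is \emph{exactly} $2h(\gamma)$, giving the clean, effective bound $\abs{t_i} \ll h(\gamma)$ with no loose constants. If you want the archimedean-only route to suffice, you should restrict to the setting that the main theorem actually needs (algebraic integers, $t_i \geq 0$) and say so.

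Second, the ``main obstacle'' you flag — the reverse bound $h(\gamma) \ll h(M)^{c_2}$ — is not an obstacle but a misreading of the lemma's notation. The $h(M)$ in the conclusion is not the Weil height of the real number $\abs{\gamma^{(1)}}$; it is shorthand for $h(\gamma)$, the Weil height of $\gamma$ itself (which is invariant under the choice of conjugate). That this is the intended reading is confirmed both by the paper's proof, which concludes from $\abs{t_i} \ll h(\gamma)$ with no further step, and by its application in Case~1, where the lemma's $h(\cdot)^c$ is immediately bounded via the Mahler measure by $\log\abs{\sigma_1}$. With this reading, your concern about the degenerate case $\abs{\sigma_1(\gamma)} \equiv 1$ disappears, since $h(\gamma)$ is unbounded along a nontrivial lattice direction by multiplicative independence, and your argument closes in the same way the paper's does.
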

        \begin{proof}
            We start with moving the conjugation of $\gamma$ down to the conjugations of the individual $\gamma_i$, i.e. if $\gamma^{(1)}$ is the conjugation of $\gamma$ under the embedding $\Tilde{\sigma}$, let $\gamma_i^{(1)} = \Tilde{\sigma}\br{\gamma_i}$, then we write
            \[
                M - m = \abs{ \gamma^{(1)} } - \abs{ \gamma^{(2)} } = \abs{ \br{ \gamma_1^{(1)} }^{t_1} \cdots \br{ \gamma_s^{(1)} }^{t_s} } - \abs{ \br{ \gamma_1^{(2)} }^{t_1} \cdots \br{ \gamma_s^{(2)} }^{t_s} },
            \]
            and thus
            \[
                1 - \frac{m}{M} = 1 - \underbrace{ \abs{ \br{ \gamma_1^{(1)} }^{t_1} \cdots \br{ \gamma_s^{(1)} }^{t_s} \br{ \gamma_1^{(2)} }^{-t_1} \cdots \br{ \gamma_s^{(2)} }^{-t_s} } }_{=: e^\Lambda}.
            \]

            We want to use Proposition \ref{prop:Baker} to bound $\Lambda$. We have that $h(\gamma_i) \ll 1$. If we also had $\abs{t_i} \ll h(M)$, then we would have, by Proposition \ref{prop:Baker}, that $\abs{\Lambda} > h(M)^{-c}$ for some effectively computable $c$ independent of the $t_i$, which is precisely the constant of the proposition multiplied with the heights $h(\gamma_i)$. And if $\abs{\Lambda} > h(M)^{-c}$, then $1 - e^{\Lambda} \gg h(M)^{-c}$ which then proves the assumption after multiplying the inequality with $M$ (we can make the inequality $1-e^{\Lambda} \gg h(M)^{-c}$ a strict $">"$ by allowing for a larger constant $c$).

            So what remains is to prove that $\abs{t_i} \ll h(M)$. For that let $S$ be the set of all places $\nu$ of $K$ for which $ \abs{ \gamma^{(1)} }_\nu \neq 1$ but in any case includes all non-archimedean places $\infty_1, \dots, \infty_d$, that is $S = \set{\nu_1, \dots, \nu_n, \infty_1, \dots, \infty_d}$.

            In the spirit of an S-adic version of Minkowski spaces, we identify $\gamma$ with the $\abs{S}$-dimensional vector of logarithms of the individual valuations, i.e.
            \[
                \gamma \mapsto
                \underbrace{
                \begin{pmatrix}
                    \log\abs{\gamma}_{\infty_1} \\
                    \vdots  \\
                    \log\abs{\gamma}_{\infty_d} \\
                    \log\abs{\gamma}_{\nu_1} \\
                    \vdots \\
                    \log\abs{\gamma}_{\nu_n}
                \end{pmatrix}
                }_{=: v}
                =
                t_1
                \begin{pmatrix}
                    \log\abs{\gamma_1}_{\infty_1} \\
                    \vdots  \\
                    \log\abs{\gamma_1}_{\infty_d} \\
                    \log\abs{\gamma_1}_{\nu_1} \\
                    \vdots \\
                    \log\abs{\gamma_1}_{\nu_n}
                \end{pmatrix}
                + \cdots +
                t_s
                \begin{pmatrix}
                    \log\abs{\gamma_s}_{\infty_1} \\
                    \vdots  \\
                    \log\abs{\gamma_s}_{\infty_d} \\
                    \log\abs{\gamma_s}_{\nu_1} \\
                    \vdots \\
                    \log\abs{\gamma_s}_{\nu_n}
                \end{pmatrix}.                
            \]

            We further see that the right-hand side is of the form $v = \Gamma \br{ t_1, \dots, t_s }^\top$ for the $(d+n) \times s$ dimensional matrix $\Gamma$ of the $\log\abs{\gamma_i}_\nu$. Since $\gamma_1, \dots, \gamma_s$ are multiplicatively independent, the matrix $\Gamma$ has full column-rank. We can thus multiply the equation with $ \br{ \Gamma^\top \Gamma }^{-1} \Gamma^\top$ and apply the $l_1$ norm. Using the consistency $\norm{Ax} \leq \norm{A}\cdot \norm{x}$ and hiding the matrix norms inside a constant $c$ that does not depend on the $t_i$, this gives
            \[ 
                c \cdot \abs{v}_{l_1} \geq \abs{ \begin{pmatrix} t_1 \\ \vdots \\ t_s \end{pmatrix} }_{l_1},  
            \]
            which of course implies $\abs{t_i} \ll \abs{v}_{l_1}$. For the final remaining estimate, note that to compute the height $h(\gamma) = \sum_{\nu} \max\br{ \log\abs{ \gamma }_\nu, 0}$, we can ignore all the valuations where $\abs{\gamma}_\nu = 1$ and thus sum precisely over all entries of the vector $v$ and take only the positive values. But by the product formula, we have $\sum_{i=1}^{d+n} v_i = 0$, i.e. the positive entries, which contribute to the height, and the negative that don't, cancel. This in turn means that if we sum the absolute values, which is precisely what we do when calculating the $l_1$-norm $\abs{v}_{l_1}$, it must be 2 times the sum of all positive (or negative, resp.) values, thus $\abs{v}_{l_1} = 2 h(\gamma)$. This gives us $\abs{t_i} \ll h(\gamma)$ and thus proves the assertion.
        \end{proof}

        The final preliminary lemma gives a simple way to control the product of pairwise maxima by the global maximum in a finite sequence of positive numbers.

        \begin{lemma}\label{lem:bound_prod_max}
            Let $a_1, \dots, a_d \in \R$ with $0 < a_1 \leq \cdots \leq a_d$, where $a_d > 1$ and $a_1\cdots a_d = 1$. Then we get
            \[
                \prod_{ \substack{ i\in\set{1,\dots, d} \\ i\neq j } } \max\set{a_i, a_j} \geq a_d^{\frac{1}{d-1}}
            \]
            for all fixed $j \in \setlength{1,\dots, d}$.
        \end{lemma}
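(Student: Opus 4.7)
The plan is to take logarithms and work additively. Setting $x_i = \log a_i$, the hypotheses become $x_1 \le \cdots \le x_d$, $\sum_{i=1}^d x_i = 0$, and $x_d > 0$, and the claim reduces to
\[
    S_j := \sum_{i\neq j} \max(x_i, x_j) \ge \frac{x_d}{d-1}.
\]

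The case $j = d$ is immediate: every $\max(x_i, x_d)$ equals $x_d$, so $S_d = (d-1) x_d \ge x_d/(d-1)$ because $x_d > 0$ and $(d-1)^2 \ge 1$.

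For $j < d$, the ordering collapses the sum to $S_j = (j-1) x_j + \sum_{i=j+1}^d x_i$, and I would derive two complementary linear-in-$x_j$ lower bounds. Isolating the $x_d$-term in the tail and using $x_i \ge x_j$ for $j < i < d$ gives the first bound $S_j \ge (d-2) x_j + x_d$. Alternatively, flipping the tail via the zero-sum constraint as $\sum_{i > j} x_i = -\sum_{i \le j} x_i$ and using $x_i \le x_j$ for $i < j$ yields the complementary bound $S_j \ge -x_j$. These are two linear functions of $x_j$ with opposite slopes, meeting at $x_j = -x_d/(d-1)$, where both equal $x_d/(d-1)$; hence $S_j$ dominates their pointwise maximum, which is itself at least $x_d/(d-1)$ for every $x_j \in \R$.

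The main thing to watch is that neither bound suffices on its own — the first degenerates as $x_j \to -\infty$, the second as $x_j \to +\infty$ — so they genuinely must be combined. Equivalently, the extremal configuration $x_1 = \cdots = x_{d-1} = -x_d/(d-1)$ saturates both bounds simultaneously, confirming that $a_d^{1/(d-1)}$ is the sharp constant and that no easier single-step estimate can produce it.
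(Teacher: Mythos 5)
Your argument is correct, and it follows a genuinely different route from the paper's. You pass to logarithms and derive two complementary linear lower bounds for $S_j = \sum_{i\neq j}\max(x_i,x_j)$ when $j<d$: the bound $S_j \geq (d-2)x_j + x_d$ by replacing each middle term $x_i$ ($j<i<d$) with the smaller $x_j$, and the bound $S_j \geq -x_j$ by instead invoking the zero-sum constraint to rewrite the tail as $-\sum_{i\le j}x_i$ and then replacing each head term $x_i$ ($i<j$) with the larger $x_j$. Since these two affine functions of $x_j$ have slopes $d-2\ge 1$ and $-1$, their pointwise maximum is a V that bottoms out at their intersection $x_j=-x_d/(d-1)$, where both equal $x_d/(d-1)$; hence $S_j$ is always at least that. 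The paper instead stays multiplicative: it first verifies the claim on the extremal configuration $a_1=\cdots=a_{d-1}=a_d^{-1/(d-1)}$, then perturbs by factors $c_i$ with $c_1\cdots c_{d-1}=1$ and checks case by case (depending on whether $c_j\le 1$ or $c_j\ge 1$) that the perturbation can only increase the product. Your version is more transparent about why $1/(d-1)$ is the sharp exponent — it falls out as the balancing value of the two bounds rather than being verified after the fact — and it avoids the perturbation bookkeeping, at the cost of the additive reformulation. Both are elementary and correct.
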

        \begin{proof}
            We express everything in terms of the largest number $a_d$. The extreme case, where $a_1$ is as large as possible, is
            \[
                \frac{1}{a_d^{\frac{1}{d-1}}} \cdots \frac{1}{a_d^{\frac{1}{d-1}}} \cdot a_d = 1,
            \]
            where all terms except for $a_d$ are equal and thus have to be $a_d^{-\frac{1}{d-1}}$ for the product to still be $1$. In this situation, if $j = d$, then the maximum $\max\set{a_i, a_j}$ is $a_d$ every time and the product is $a_d^{d-1}$, which is greater than our purported bound of $ a_d^{\frac{1}{d-1}}$. 
            
            If $j < d$ instead, then the maximum is $a_d^{\frac{1}{d-1}}$ in $d-2$ cases and $a_d$ once, thus
            \[
                \prod_{ \substack{ i\in\set{1,\dots, d} \\ i\neq j } } \max\set{a_j, a_i} = \br{ \frac{1}{a_d^{\frac{1}{d-1}}} }^{d-2} a_d = a_d^{ 1 - \frac{d-2}{d-1} } = a_d^{\frac{1}{d-1}}.
            \]
            Now we move from the extreme case to the general. Let $a_i = a_d^{\frac{1}{d-1}} c_i$, where $c_1\leq\cdots \leq c_n \leq 1$, and ${ 1\leq c_{n+1} \leq \cdots \leq c_{d-1} }$. For the product $a_1\cdots a_d = 1$ to still be $1$, we need $c_1\cdots c_{d-1} = 1$ to cancel.
            
            If $n < j < d$ then it is only the constants $\geq 1$ that show up in the product $\prod_{i\neq j}\max\set{a_j,a_i}$, and we can ignore them to get the purported lower bound. If instead $j \leq n$ then we get the constant $c_j$ for the first $j$ indices, and $c_i$ afterwards. But $\prod_{i=1}^{j}c_j \prod_{i=j+1}^d c_i \geq 1$, since we substituted the possibly smaller $c_i$ for $c_j$ and the product was $1$ beforehand. Thus we can again ignore the constants and get the purported bound.
        \end{proof}        
        
    \section{Proof of the main theorem}
        Let $\br{x,y, \br{t_1,\dots t_s}} \in \Z^2 \times \Z^s$ be a solution to \eqref{eq:main} and assume that all $t_i$ are non-zero, as we would otherwise carry out the proof with $s' \leq s$ many $\gamma_i$ that actually need to be written in the equation. We also assume $xy\neq 0$, as for $y = 0$, we get a solution for $x= \pm 1$ and any choice for $(t_1, \dots, t_s)$. Similarly, for $x = 0$, it depends on the $\gamma_i$ whether there are solutions, e.g. if they are units, then $y = \pm 1$ and any $(t_1, \dots, t_s)$ would be eligible.

        Let $t := \max_{i \in \set{1,\dots, s}} \abs{t_i}$ and $\beta_i := \Tilde{\sigma_i}\br{ x - \gamma_1^{t_1}\cdots \gamma_s^{t_s} y}$. We also write $\sigma_i = \Tilde{\sigma_i}\br{ \gamma_1^{t_1}\cdots \gamma_s^{t_s} }$ and $\gamma_r^{(i)}$ for the individual embedments $\Tilde{\sigma}_i(\gamma_r)$ to make the expressions more readable. 
        
        After reshuffling the indices, we can further assume that $\abs{\sigma_1} \geq \cdots \geq \abs{\sigma_d}$. The polynomial $f(X) = N_{K/\Q}\br{X - \gamma_1^{t_1}\cdots\gamma_s^{t_s}}$ is irreducible, as $\Q\br{ \gamma_1^{t_1}\cdots \gamma_s^{t_s} } = K$ per our requirement on $(x,y,(t_1,\dots,t_s))$.
        
        We define the type of the solution to be the index $j$, for which the equation $\abs{\beta_j} = \min_{i\in \set{1,\dots, d}} \abs{\beta_i}$ holds. We distinguish between the following two cases:
        \begin{description}
            \item[Case 1] There exist at least two distinct indices $i \in \set{1,\dots, d}\backslash \set{j}$ such that $\abs{ \log\abs{ \frac{\sigma_i}{\sigma_j} } } \geq \kappa \log t$, where $\kappa$ is a (fixed but) sufficiently large constant independent of the solution $\br{x,y,\br{t_1,\dots, t_s}}$.
            \item[Case 2] For all but one index $i \in \set{1,\dots, d}\backslash \set{j}$, we have $\log\abs{ \frac{\sigma_i}{\sigma_j} } \leq \kappa \log t$.
        \end{description}

        For each case, we want to prove $t \ll 1$, i.e. that $t$ can be effectively bounded by some constant and in further consequence $\abs{x}, \abs{y} \ll 1$, which would prove the finiteness of the number of solutions to Thue Equation $\eqref{eq:main}$.

        \subsection{Case 1}
            Let $k, l$ be two distinct indices that fulfil the condition of Case 1, i.e.
            \begin{equation}\label{eq:cond}
                \abs{ \log\abs{ \frac{\sigma_k}{\sigma_j} } } \geq \kappa \log t, \; \abs{ \log\abs{ \frac{\sigma_l}{\sigma_j} } } \geq \kappa \log t  \tag{$**$}
            \end{equation}
            holds. We state Siegel's identity,
            \begin{equation}\label{eq:Siegel-alt}
                \beta_j\br{\sigma_k-\sigma_l} + \beta_l\br{\sigma_j-\sigma_k} + \beta_k\br{\sigma_l-\sigma_j} = 0
            \end{equation}
            and by dividing by the third term on the left-hand side and rearranging things slightly, this is equivalent to
            \begin{equation}\label{eq:Siegel}
                \underbrace{ \frac{\beta_j}{\beta_k} \cdot \frac{\sigma_k-\sigma_l}{\sigma_j-\sigma_l} }_{=:L} + \underbrace{ \frac{\beta_l}{\beta_k} \cdot \frac{\sigma_j-\sigma_k}{\sigma_j-\sigma_l} }_{=: L'} = 1.
            \end{equation}
            
            We now show that it is possible to choose $k,l$ so that $L$ is very small: But first, regardless of the choice of $k$ and $l$, since $\br{x,y,\br{t_1,\dots, t_s}}$ is a solution to Thue Equation \eqref{eq:main}, we have that
            \[
                1 = \abs{ N_{K/\Q}\br{x-\gamma_1^{t_1}\cdots \gamma_s^{t_s}y} } = \abs{\beta_1 \cdots \beta_d}.
            \]
            If we use the minimality of $\beta_j$, we have $2 \abs{\beta_i} \geq \abs{\beta_i - \beta_j} = \abs{y\br{\sigma_j-\sigma_i}}$ and get that
            \[
                \abs{\beta_j} = \prod_{ \substack{i\in \set{1,\dots, d} \\ i \neq j} } \frac{1}{\abs{\beta_i}} \ll \frac{1}{ \abs{y}^{d-1} \prod_{ \substack{i\in \set{1,\dots, d} \\ i \neq j} } \abs{\sigma_j-\sigma_i} }.
            \]
            
            We apply Lemma \ref{lem:tijd} on each of the $\abs{ \sigma_j - \sigma_i }$ and use the equality to the Mahler measure for the height to conclude easily that $h(\sigma_i) \ll \log\max_i\abs{\sigma_i} = \log\abs{\sigma_1}$. We thus have
            \[
                \abs{\beta_j} \ll \frac{ \log\abs{\sigma_1}^{(d-1)c} }{ \abs{y}^{d-1} \prod_{ \substack{i\in \set{1,\dots, d} \\ i \neq j} } \max\set{ \abs{\sigma_j}, \abs{\sigma_i} } }.
            \]
            
            Next, we apply Lemma \ref{lem:bound_prod_max} on the product $\prod_{ \substack{i\in \set{1,\dots, d} \\ i \neq j} } \max\set{ \abs{\sigma_j}, \abs{\sigma_i} }$, which gives
            \[
                \prod_{ \substack{i\in \set{1,\dots, d} \\ i \neq j} } \max\set{ \abs{\sigma_j}, \abs{\sigma_i} } \geq \abs{\sigma_1}^{ \frac{1}{d-1} }
            \]
            and combining this with the previous bound gives
            \[
                \abs{\beta_j} \ll \frac{ \log\abs{\sigma_1}^{(d-1)c} }{ \abs{y}^{d-1} \abs{\sigma_1}^{\frac{1}{d-1}} }.
            \]
            Plugging this into our expression $L$ and once more applying $\abs{\beta_k} \geq \frac{y}{2} \abs{\sigma_j - \sigma_k}$ finally gives
            \begin{equation}\label{eq:L-bound1}
                L = \frac{\beta_j}{\beta_k} \cdot \frac{\sigma_k-\sigma_l}{\sigma_j-\sigma_l} \ll \frac{ \log\abs{\sigma_1}^{(d-1)c} }{ \abs{y}^d \abs{\sigma_1}^{ \frac{1}{d-1} } } \cdot \frac{\abs{\sigma_k - \sigma_l}}{ \abs{\sigma_j - \sigma_k} \abs{\sigma_j - \sigma_l} } .
            \end{equation}

            The first factor already looks like it could be of order $e^{-c t}$, but we first check that the second term cannot ruin everything, at least for specific choices for $k,l$.
            
            We start by noting that since $x \neq 0$, we have $\abs{x} \geq 1$. We also have that $\abs{\beta_j} = \abs{x - \sigma_j y } \leq q < 1$ for some $q$ and thus $1-q \leq \abs{x} - q \leq \abs{\sigma_j y}$, or
            \begin{equation}\label{eq:lb_y_sigj}
                \abs{y}^{-1} \ll \abs{\sigma_j}.
            \end{equation}
            
            Furthermore, we have $ \abs{\sigma_j-\sigma_k} = \abs{ \sigma_{\max} \br{ 1 - \frac{\sigma_{\min}}{\sigma_{\max}} } } $ if we denote, by abuse of notation, $ \sigma_{\max}$ as the larger, and $\sigma_{\min}$ as the smaller of the two. But since $k$ fulfils \eqref{eq:cond}, we have $\abs{ \frac{\sigma_{\min}}{\sigma_{\max}} } \leq t^{-\kappa} $, which is less then $\frac{1}{2}$ for example, and thus
            \begin{equation}\label{eq:lb_trick}
                \abs{\sigma_j-\sigma_k} = \abs{ \sigma_{\max} \br{ 1 - \frac{\sigma_{\min}}{\sigma_{\max}} } } \gg \abs{\sigma_{\max}},
            \end{equation}
            which also holds for $l$ of course.

            We now differentiate between different cases for $j$ and show that for each one, we can choose $k,l$ that fulfil \eqref{eq:cond} and do not blow up the expression $L$.
            
            \begin{enumerate}
                \item Case $j \in \set{1,d}$:
                \begin{enumerate}
                    \item Case $j = 1$: We can choose any $k,l$ that fulfil \eqref{eq:cond}. We have $\abs{\sigma_k - \sigma_l} \leq 2 \max\br{\abs{\sigma_k}, \abs{\sigma_l}} \ll \abs{\sigma_1}$, and since $k$ fulfils \eqref{eq:cond}, we have $\abs{\sigma_1 - \sigma_k} = \abs{\sigma_1} \abs{1 - \frac{\sigma_k}{\sigma_1}} \gg \abs{\sigma_1}$ by \eqref{eq:lb_trick}, same for $l$.
                    
                    Thus, we would even further improve our first bound \eqref{eq:L-bound1} for $L$, i.e.
                    \begin{align*}
                        L \ll \frac{ \log\abs{\sigma_1}^{(d-1)c} }{ \abs{y}^d \abs{\sigma_1}^{ \frac{1}{d-1} } } \cdot \frac{\abs{\sigma_k - \sigma_l}}{ \abs{\sigma_j - \sigma_k} \abs{\sigma_j - \sigma_l} } &\ll \frac{ \log\abs{\sigma_1}^{(d-1)c} }{ \abs{y}^d \abs{\sigma_1}^{ \frac{1}{d-1} } } \cdot \frac{ \abs{\sigma_1} }{ \abs{\sigma_1} \abs{\sigma_1} } \\
                        &= \frac{ \log\abs{\sigma_1}^{(d-1)c} }{ \abs{y}^d \abs{\sigma_1}^{ 1 + \frac{1}{d-1} } }.
                    \end{align*}

                    \item Case $j = d$: We choose $k=1$, which fulfils \eqref{eq:cond}, and any other $l$ that does too. Then, $\abs{\sigma_1 - \sigma_l} \ll \abs{\sigma_1}$ on the one hand and $\abs{\sigma_d - \sigma_1} \gg \abs{\sigma_1}, \; \abs{\sigma_d-\sigma_l} \gg \abs{\sigma_d}$ by \eqref{eq:lb_trick} on the other, while $\abs{\sigma_d} \gg \abs{y}^{-1}$ by \eqref{eq:lb_y_sigj}.

                    Thus we worsen our first bound \eqref{eq:L-bound1} for $L$ by a factor $\abs{y}$,
                    \[
                        L \ll \frac{ \log\abs{\sigma_1}^{(d-1)c} }{ \abs{y}^d \abs{\sigma_1}^{ \frac{1}{d-1} } } \cdot \frac{ \abs{\sigma_1} }{ \abs{\sigma_1} \abs{y}^{-1} } = \frac{ \log\abs{\sigma_1}^{(d-1)c} }{ \abs{y}^{d-1} \abs{\sigma_1}^{ \frac{1}{d-1} } }.
                    \]
                \end{enumerate}

                \item Case $j \not \in \set{1,d}$:
                \begin{enumerate}
                    \item Case $\abs{\sigma_j} \gg \abs{\sigma_1}^{\frac{1}{2}}$: We choose $k = d$, which fulfils \eqref{eq:cond}, and any other $l$ that does too. Then, $\abs{\sigma_d-\sigma_l} \ll \abs{\sigma_1}$, and $\abs{\sigma_j-\sigma_d} \abs{\sigma_j-\sigma_l} \gg \abs{\sigma_j}^2 \gg \abs{\sigma_1}$, as a combination of \eqref{eq:lb_trick} and the (sub-) case condition. Plugging everything into \eqref{eq:L-bound1} gives
                    \[
                        L \ll \frac{ \log\abs{\sigma_1}^{(d-1)c} }{ \abs{y}^d \abs{\sigma_1}^{ \frac{1}{d-1} } } \cdot \frac{ \abs{\sigma_1} }{ \abs{\sigma_1 }} = \frac{ \log\abs{\sigma_1}^{(d-1)c} }{ \abs{y}^d \abs{\sigma_1}^{ \frac{1}{d-1} } }.
                    \]

                    \item Case $\abs{\sigma_j} \ll \abs{\sigma_1}^{\frac{1}{2}}$: We choose $k = 1$, which fulfils \eqref{eq:cond}, and any other $l$ that does too. Then, $\abs{\sigma_1-\sigma_l} \ll \abs{\sigma_1}$, and $\abs{\sigma_j-\sigma_1} \gg \abs{\sigma_1}$, while $\abs{\sigma_j-\sigma_l} \gg \abs{\sigma_j} \gg \abs{y}^{-1}$, as a combination of \eqref{eq:lb_trick} and \eqref{eq:lb_y_sigj}.

                    Thus, 
                    \[
                        L \ll \frac{ \log\abs{\sigma_1}^{(d-1)c} }{ \abs{y}^d \abs{\sigma_1}^{ \frac{1}{d-1} } } \cdot \frac{ \abs{\sigma_1} }{ \abs{\sigma_1} \abs{y}^{-1} } = \frac{ \log\abs{\sigma_1}^{(d-1)c} }{ \abs{y}^{d-1} \abs{\sigma_1}^{ \frac{1}{d-1} } }.
                    \]
                \end{enumerate}
            \end{enumerate}

            In all four cases, we have at least that
            \begin{equation}\label{eq:L-bound2}
                L \ll \frac{ \log\abs{\sigma_1}^{(d-1)c} }{ \abs{y}^{d-1} \abs{\sigma_1}^{ \frac{1}{d-1} } },
            \end{equation}
            and we now show that this bound is indeed exponentially small (in $t$). To that end, we write
            \[
               \log\abs{\sigma_i} = \log\abs{\Tilde{\sigma_i}\br{ \gamma_1^{t_1} \cdots \gamma_s^{t_s} }} = t_1 \log\abs{\gamma_1^{(i)}} + \cdots + t_s \log\abs{\gamma_s^{(i)}},
            \]
            which gives the system of linear equations
            \[
                \underbrace{
                \begin{pmatrix}
                    \log\abs{\gamma_1^{(1)}} & \cdots & \log\abs{\gamma_s^{(1)}} \\
                    \vdots & \ddots & \vdots \\
                    \log\abs{\gamma_1^{(d)}} & \cdots & \log\abs{\gamma_s^{(d)}} 
                \end{pmatrix}
                }_{=: \Gamma}
                \begin{pmatrix}
                    t_1 \\ \vdots \\ t_s
                \end{pmatrix}
                =
                \begin{pmatrix}
                    \log\abs{\sigma_1} \\ \vdots \\ \log\abs{\sigma_d}
                \end{pmatrix}
            \]
            if done so for all $i \in  \set{1,\dots, d}$. We take the maximum norm and use the consistency $\norm{A x} \leq \norm{A} \cdot \norm{x}$ to get 
            \begin{equation}\label{eq:lsig_1-ub}
                \log\abs{\sigma_1} \leq c_1 t,
            \end{equation}
            where $c_1 = \norm{\Gamma}_{\max}$ is effective and does not depend on $\br{x,y\br{t_1,\dots, t_s}}$. 
            
            Similarly, since $\gamma_1, \dots, \gamma_s$ are multiplicatively independent, the matrix $\Gamma$ has full column rank. Thus, $\Gamma^\top \Gamma$ is invertible, we multiply with $\Gamma^\top$ and $\br{\Gamma^\top \Gamma}^{-1}$ and take the maximum norm. This gives $t \leq c_2 \log\abs{\sigma_1}$ or $\abs{\sigma_1} \geq e^{ \frac{1}{c_2} t} $. 

            We plug in the upper and lower bounds for $\abs{\sigma_1}$ into our bound \eqref{eq:L-bound2} for $L$ and get, by basically ignoring the contribution of $\abs{y}$ with $\abs{y} \geq 1$,
            \begin{equation}\label{eq:L-bound3}
                L \ll \frac{ \log\abs{\sigma_1}^{(d-1)c} }{ \abs{y}^{d-1} \abs{\sigma_1}^{ \frac{1}{d-1} } } \ll \frac{ \br{c_1 t}^{(d-1)c} }{ \abs{y}^{d-1} e^{ \frac{1}{c_2}t \frac{1}{d-1} } } \ll e^{-c_3 t}
            \end{equation}
            for some effectively computable constant $c_3$.
            
            We now return to Siegel's Identity \eqref{eq:Siegel}, apply the bound from Equation \eqref{eq:L-bound3} and get $\log L' = \log \abs{1-L} \ll e^{-c_3 t}$. Also note that $L'\neq 1$, since $L = \frac{\beta_j}{\beta_k} \cdot \frac{\sigma_k-\sigma_l}{\sigma_j-\sigma_l} = 0$ would imply that $\sigma_k = \sigma_l$ but this is impossible per our requirement that $\Q\br{\gamma_1^{t_1}\cdots \gamma_s^{t_s}} = K$. Thus,
            \[
                0 \neq \abs{ \log L' } = \abs{ \log\abs{ \frac{\beta_l}{\beta_k} } + \log\abs{ \frac{\sigma_j-\sigma_k}{\sigma_j-\sigma_l} } } \ll e^{- c_3 t}.
            \]
            Let us now call $\sigma_A = \max\br{ \abs{\sigma_j}, \abs{\sigma_k} }$, $\sigma_a = \min\br{ \abs{\sigma_j}, \abs{\sigma_k} }$ and $\sigma_B = \max\br{\abs{\sigma_j}, \abs{\sigma_l}},$ $\sigma_b = \min\br{\abs{\sigma_j}, \abs{\sigma_l}} $. Then
            \begin{align*}
                \log\abs{ \frac{\sigma_j-\sigma_k}{\sigma_j-\sigma_l} } = \log\frac{\sigma_A}{\sigma_B} + \log\abs{ \frac{ 1 - \frac{\sigma_a}{\sigma_A} }{ 1 - \frac{\sigma_b}{\sigma_B} } },
            \end{align*}
            and since $k,l$ fulfil \eqref{eq:cond}, both $\frac{\sigma_a}{\sigma_A}, \frac{\sigma_b}{\sigma_B} \leq t^{-\kappa}$. Thus, $\log\abs{ \frac{ 1 - \frac{a}{A} }{ 1 - \frac{b}{B} } } = O\br{ t^{-\kappa} }$, which gives
            \[
                \Lambda = \abs{ \log\abs{ \frac{\beta_l}{\beta_k} } + \log\frac{\sigma_A}{\sigma_B} } \ll t^{-\kappa}.
            \]
            
            Assume for now that $\Lambda \neq 0$. Let $r$ be the unit rank of our number field then we have $\beta_k = \br{ \eta_1^{(k)} }^{b_1} \cdots \br{ \eta_r^{(k)} }^{b_r}$ in terms of the fundamental units $\eta_1, \dots, \eta_r$ of $\Z_K^\times$, same for $\beta_l$, while we can write $\sigma_A = \br{\gamma_1^{(A)}}^{t_1}\cdots \br{\gamma_s^{(A)}}^{t_s}$, same for $\sigma_B$. We can thus write
            \begin{equation}\label{eq:linearform}
                \Lambda = \abs{ \sum_{i=1}^r b_i \br{ \log\abs{ \eta_i^{(l)} } - \log\abs{ \eta_i^{(k)} } } + \sum_{i=1}^s t_i \br{ \log\abs{ \gamma_i^{(A)} }-\log\abs{ \gamma_i^{(B)} } } }  \ll t^{-\kappa}.
            \end{equation}
            
            We now argue that we can bound the $b_i$ and thus all coefficients of $\Lambda$ by $t$ to then apply Proposition \ref{prop:Baker}. 

            To that end, note that for any $i \neq j$, we have that
            \begin{align*}
                \log\abs{\beta_i} &= \log\abs{ x - \sigma_j y + y\br{ \sigma_j - \sigma_i } } \\
                &= \log\abs{y} + \log\abs{ \sigma_i - \sigma_j } + \log\abs{ 1 + \frac{\beta_j}{y \abs{\sigma_i-\sigma_j}} } \\
                &\ll \log\abs{y} + \log\abs{\sigma_i - \sigma_j} .
            \end{align*}

            On the one hand, we have $\abs{\sigma_i - \sigma_j} \leq 2 \max\br{ \abs{\sigma_i}, \abs{\sigma_j} } \ll \abs{\sigma_1}$. On the other hand, we have by Lemma \ref{lem:tijd} and Equation \eqref{eq:lb_y_sigj} that
            \[
                \abs{\sigma_i - \sigma_j} \geq \max\br{ \abs{\sigma_i}, \abs{\sigma_j} } - \min\br{ \abs{\sigma_i}, \abs{\sigma_j} }  > \frac{ \max\br{\abs{\sigma_i}, \abs{\sigma_j}} }{ h(\sigma_i)^{-c} } \gg \frac{ 1 }{ \abs{y} h(\sigma_i)^{c} }.
            \]
            We thus have
            \[
                \abs{ \log\abs{\sigma_i-\sigma_j} } \ll \max\br{ \log\abs{\sigma_1}, \log\abs{y} + \log h(\sigma_i) },
            \]
            where we have, as a reminder, $h(\sigma_i) \ll \log\abs{\sigma_1}$ by looking at the Mahler measure to compute the height, and $\log\abs{\sigma_1} \ll t$ by Equation \eqref{eq:lsig_1-ub}. This gives $\log\abs{\sigma_i - \sigma_j} \ll \log y + t$ and thus
            \begin{align*}
                \log\abs{\beta_i} \ll \log\abs{y} + t
            \end{align*}
            for all $i \neq j$. We also get the same bound for the positive quantity $-\log\abs{\beta_j}$, since by the above inequality,
            \[
                -\log\abs{\beta_j} = \sum_{i = 1, i \neq j}^d \log\abs{\beta_i} \ll \log\abs{y} + t.
            \]

            Next, we take a look at the coefficients of the polynomial $\br{X - \sigma_1}\cdots \br{X - \sigma_d}$. Their absolute values can obviously be bounded by $\abs{\sigma_1}^d$ and $\abs{\sigma_1}^d \leq e^{d\, c_1 t}$ by Equation \eqref{eq:lsig_1-ub}. If we then apply Proposition \ref{prop:BuGy} with $H = e^{d\, c_1 t}$, since $R \ll 1$ as it does not depend on the $t_i$, we get that
            \begin{equation}\label{eq:logxy-bound}
                \log\abs{x}, \log\abs{y} \ll t
            \end{equation}
            and thus
            \begin{equation}\label{eq:bugy-bound}
                \abs{ \log\abs{\beta_i} } \ll \log\abs{y} + t \ll t
            \end{equation}
            for all $i \in \set{1, \dots, d}$.

            We return to our decomposition into powers of fundamental units,
            \[
                \beta_i = \br{ \eta_1^{(i)} }^{b_1} \cdots \br{ \eta_r^{(i)} }^{b_r}.
            \]
            Doing this for all $i = 1, \dots, r$, and we only care that the unit rank $r < d$, reveals that $(b_1, \dots, b_r)$ is a solution to the system of linear equations
            \[
                \underbrace{
                \begin{pmatrix}
                    \log\abs{ \eta_1^{(1)} } & \cdots & \log\abs{ \eta_r^{(1)} } \\
                    \vdots & \ddots & \vdots \\
                    \log\abs{ \eta_1^{(r)} } & \cdots & \log\abs{ \eta_r^{(r)} }
                \end{pmatrix}
                }_{= H}
                \begin{pmatrix}
                    b_1 \\ \vdots \\ b_r
                \end{pmatrix}
                =
                \begin{pmatrix}
                    \log\abs{\beta_1} \\ \vdots \\ \log\abs{\beta_r}
                \end{pmatrix}.
            \]

            Since the $\eta_i$ are multiplicatively independent, the matrix $H$ is invertible. We multiply with the inverse $H^{-1}$ and apply the maximum norm, which gives, in combination with Equation \eqref{eq:bugy-bound},
            \begin{equation}\label{eq:bi-bound}
                \max_i\abs{b_i} \leq c_4 \max_i \log\abs{\beta_i} \ll t,
            \end{equation}
            where $c_4 = \norm{H^{-1}}_{\max}$. 

            If we return to Equation \eqref{eq:linearform}, we now have effectively bounded the absolute value of every coefficient by $t$. The heights of the $\eta_i^{(\cdot)}, \gamma_i^{(\cdot)}$ do not depend on $x,y$ or the $t_i$ and are thus effectively bounded by $1$. In the case that $\Lambda \neq 0$, we plug everything into Proposition \ref{prop:Baker}, with $h_i \ll 1, B \ll t$ and get $\log\abs{\Lambda} \geq c_5 \log t$ for some effectively computable constant $c_5$. If we compare this with the logarithm of the upper bound from Equation \eqref{eq:linearform}, we get that $\kappa \log t \leq c_5 \log t$. 
            
            Now, if $\kappa$ is sufficiently large, i.e. larger than the constant $c_5$ which itself is independent from $\kappa$, we get that $t \ll 1$.
            
            We now have to check what happens if the linear form vanishes instead, i.e. if $\Lambda = 0$. In this case, we have that $\frac{\beta_l}{\beta_k} = \frac{\sigma_B}{\sigma_A}$ and we have to differentiate between four cases:

            \begin{enumerate}
                \item Case $A=B=j$: This implies $\beta_l = \beta_k$ and thus $\sigma_l = \sigma_k$, which we again ruled out by requiring that $\Q\br{\gamma_1^{t_1}\cdots \gamma_s^{t_s}} = K$.

                \item Case $A=k, B = l$: This implies $\beta_l\sigma_k = \beta_k \sigma_l$, if we plug this into Siegel's Identity \eqref{eq:Siegel-alt}, it then gives $\beta_j\br{\sigma_k-\sigma_l} = \beta_l\sigma_j + \beta_k\sigma_j$ and thus
                \[
                    \frac{\beta_j \br{ \sigma_k-\sigma_l } }{\beta_l \sigma_j} - 1 = \frac{\beta_k}{\beta_l}.
                \]
                The fraction on the left-hand side is $\ll e^{- c_6 t}$, which follows completely analogously to how we showed Equation \eqref{eq:L-bound3}, that $L \ll e^{-c_3 t}$. Thus,
                \[
                    \log\abs{ \frac{\beta_k}{\beta_l} } = \log\br{ 1 - O\br{e^{-c_6 t}} } \ll e^{-c_6 t},
                \]
                and the first equality also implies that $\log\abs{\beta_k / \beta_l} \neq 0$. We now do the same thing, i.e. write $\beta_i = \br{ \eta_1^{(i)} }^{b_1} \cdots \br{ \eta_r^{(i)} }^{b_r}$ and apply Proposition \ref{prop:Baker} to the linear form $0 \neq \log\abs{\beta_k / \beta_l} \ll e^{-c_6 t}$, since $\abs{b_i} \ll t$ by Equation \eqref{eq:bi-bound}, and deduce $t \ll 1$.

                \item Case $A=k, B = j$: First, this means that $\abs{\sigma_k} > \abs{\sigma_j} > \abs{\sigma_l}$. Second, this implies $\beta_l\sigma_k = \beta_k\sigma_j$ and thus
                $\beta_j\br{ \sigma_k-\sigma_l } + \beta_l\sigma_j + \beta_k\sigma_l - 2 \beta_k\sigma_j = 0$, if we plug it into Siegel's identity \eqref{eq:Siegel-alt}. This gives
                \[
                    \frac{\beta_j \br{\sigma_k-\sigma_l}}{2\beta_k\sigma_j} + \frac{\sigma_l}{2 \sigma_j} - 1 = -\frac{\beta_l}{2\beta_k}.
                \]
                
                The first fraction on the left-hand side is $\ll e^{-c_7 t}$, analogously to how we showed $L \ll e^{-c_3 t}$. The second fraction $ \abs{\sigma_l / \sigma_j} \ll t^{-\kappa}$, since $l$ fulfils \eqref{eq:cond} and $\abs{\sigma_j} \geq \abs{\sigma_l}$. This gives
                \[
                    0 \neq \abs{ \log\abs{ \frac{\beta_l}{\beta_k} } - \log 2 } \ll t^{- \kappa},
                \]
                and thus $t \ll 1$ analogously to the case $\Lambda \neq 0$.

                \item Case $A=j, B = l$: This is analogous to the previous case, we now have $\abs{\sigma_l} \geq \abs{\sigma_j} \geq \abs{\sigma_k}$ and $ \beta_l\sigma_j = \beta_k\sigma_l $ and get, by plugging this into Siegel's identity \eqref{eq:Siegel},
                \[
                    -\frac{\beta_j\br{\sigma_k-\sigma_l}}{2\beta_l\sigma_j} + \frac{\sigma_k}{2\sigma_j} - 1 = -\frac{\beta_k}{2\beta_l}.
                \]
                The first fraction on the left-hand side is again $\ll e^{-c_8 t}$, analogously to $L \ll e^{-c_3 t}$, while the second fraction is $\ll t^{-\kappa}$, since $k$ fulfils \eqref{eq:cond} and $\abs{\sigma_j} \geq \abs{\sigma_k}$. This gives
                \[
                    0 \neq \abs{ \log\abs{ \frac{\beta_k}{\beta_l} } - \log 2 } \ll t^{-\kappa}
                \]
                and thus $t \ll 1$ analogously to the case $\Lambda \neq 0$.
                
            \end{enumerate}

        We have proven $t \ll 1$ in all four subcases and thus throughout Case 1. If we plug this into Equation \eqref{eq:logxy-bound}, we have $\abs{x}, \abs{y} \ll 1$ and thus an effective upper bound to the size of the solutions, which means that there are only finitely many.
            
        \subsection{Case 2}
            If \eqref{eq:cond} does not hold, then $\log\abs{\sigma_i / \sigma_j} \ll \log t$ holds instead for all but one index $i \in \set{1, \dots, d}\backslash\set{j}$. We rename the indices so it holds for $ i = 1, \dots, d-2$, while $j=d$. We lose the ordering of $\sigma_1, \dots, \sigma_d$ but do not need this in the following arguments. If we rewrite $ \log\abs{\sigma_i / \sigma_d} \ll \log t $ for $i \in \set{1, \dots, d-2}$, then this means that
            \begin{equation}\label{eq:Case2}
                \underbrace{
                \begin{pmatrix}
                    \log\abs{ \frac{ \gamma_1^{(1)} }{ \gamma_1^{(d)} } } & \cdots & \log\abs{ \frac{ \gamma_s^{(1)} }{ \gamma_s^{(d)} } } \\
                    \vdots & \ddots & \vdots \\
                    \log\abs{ \frac{ \gamma_1^{(d-2)} }{ \gamma_1^{(d)} } } & \cdots & \log\abs{ \frac{ \gamma_s^{(d-2)} }{ \gamma_s^{(d)} } }
                \end{pmatrix}
                }_{= \Gamma}
                \begin{pmatrix}
                    t_1 \\ \vdots \\ t_s
                \end{pmatrix}
                \ll
                \begin{pmatrix}
                    \log t \\ \vdots \\ \log t
                \end{pmatrix}
            \end{equation}
            holds. The matrix $\Gamma$ has full column rank per our requirement in our theorem, thus $\Gamma^\top \Gamma$ is invertible. We multiply both sides first with $\Gamma^\top$ and then $\br{\Gamma^\top\Gamma}^{-1}$ and apply the maximum norm. 
            
            Since the matrix does not depend on $\br{x,y,\br{t_1,\dots, t_s}}$, neither does their norm, and after using $\norm{Ax} \leq \norm{A}\cdot \norm{x}$, we get $t \ll \log t$ and thus $t \ll 1$.

            If we plug this into Equation \eqref{eq:logxy-bound}, we can use Proposition \ref{prop:BuGy} in Case 2 as well to derive said inequality, we get that $\abs{x}, \abs{y} \ll 1$, which proves the finiteness of the number of solutions to Thue Equation \eqref{eq:main}.

        \section{Connection to Schanuel's conjecture}
    
            If we assume Schanuel's conjecture, then the full rank of the matrix $\Gamma$ follows from the $\Q$-linear independency of $\gamma_1, \dots, \gamma_s$. For sake of completeness, we state this as a second Theorem:

            \begin{theorem}
                Let $K$ be a number field of degree $d \geq 3$ and $s \leq d-2$. Let $\gamma_1, \dots \gamma_s \in K^*$ be multiplicatively independent and $\Q$-linearly independent algebraic integers.

                If Schanuel's conjecture holds, then the Thue equation
                \begin{equation*}
                    \abs{N_{K/\Q}\br{X-\gamma_1^{t_1}\cdots\gamma_s^{t_s}Y}} = 1    
                \end{equation*}
                has only finitely many integer solutions $\br{x, y, \br{t_1, \dots, t_s}} \in \Z^2 \times \N^s$ with $xy \neq 0$ and $\Q\br{\gamma_1^{t_1}\cdots\gamma_s^{t_s}} = K$, all of which can be effectively computed.
            \end{theorem}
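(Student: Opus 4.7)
The plan is to deduce Theorem 2 from Theorem 1 by verifying, under Schanuel's conjecture, that the hypothesized $\Q$-linear and multiplicative independence of $\gamma_1, \dots, \gamma_s$ already implies the rank condition \eqref{eq:requirement}. Once $(*)$ is established, the finiteness and effectivity conclusions follow immediately from Theorem 1.

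First, I would extract the following well-known consequence of Schanuel: if $\alpha_1, \dots, \alpha_n$ are non-zero algebraic numbers whose logarithms are $\Q$-linearly independent, then $\log\alpha_1, \dots, \log\alpha_n$ are algebraically independent over $\Q$. Indeed, Schanuel applied to $z_j = \log\alpha_j$ forces the transcendence degree of $\Q(\log\alpha_1,\dots,\log\alpha_n,\alpha_1,\dots,\alpha_n)$ over $\Q$ to be at least $n$, and since the $\alpha_j$ are algebraic the logarithms alone must carry this transcendence. Taking contrapositives, any $\R$-linear dependence among logarithms of positive algebraic numbers already refines to a $\Q$-linear one.

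Next, I would suppose, toward a contradiction, that $(*)$ fails for some choice of $d-1$ embeddings $\tilde\sigma_1, \dots, \tilde\sigma_{d-1}$: there exists a nonzero $c \in \R^s$ such that $\sum_k c_k \log|\tilde\sigma_i(\gamma_k)|$ takes the same value for all $i = 1, \dots, d-1$. Then for each pair $i \neq j$ in this range, $c$ lies in the $\R$-kernel of the linear functional $x \mapsto \sum_k x_k \log|\tilde\sigma_i(\gamma_k)/\tilde\sigma_j(\gamma_k)|$, whose coefficients are logarithms of positive algebraic numbers. The Schanuel consequence above, together with standard linear algebra showing that such $\R$-kernels are spanned by their $\Q$-rational points, lets me replace $c$ by a nonzero rational vector and then, after clearing denominators, by integers $(n_1, \dots, n_s) \in \Z^s \setminus \{0\}$. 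The element $\mu := \prod_k \gamma_k^{n_k} \in K^*$ is not a root of unity by multiplicative independence and satisfies $|\tilde\sigma_1(\mu)| = \cdots = |\tilde\sigma_{d-1}(\mu)|$, so $d-1$ of its conjugates share a common absolute value.

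The hard part will be the final step: ruling out the existence of such a non-torsion $\mu$ with $d-1$ conjugates of equal modulus. This is precisely where the $\Q$-linear independence of $\gamma_1, \dots, \gamma_s$ — beyond their multiplicative independence — must enter. Applying Schanuel's conjecture directly to the $\Q$-linearly independent algebraic numbers $\gamma_k$, or more usefully to the complex logarithms $\log \tilde\sigma_i(\gamma_k)$ across embeddings with carefully chosen branches, should produce enough algebraic independence among the entries $\log|\tilde\sigma_i(\gamma_k)|$ to force a contradiction with any such multiplicative coincidence. Making this final incompatibility explicit, and separating cleanly the distinct roles played by $\Q$-linear independence and multiplicative independence in the presence of Schanuel, is where I expect the bulk of the technical effort to concentrate.
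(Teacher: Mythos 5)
Your high-level strategy agrees with the paper's: Theorem~2 should follow by showing that $\Q$-linear independence of the $\gamma_k$, together with Schanuel, forces the rank condition~\eqref{eq:requirement}, after which Theorem~1 applies. The paper's proof does exactly this, keeping Case~1 verbatim and only re-deriving the full-rank statement used in Case~2.

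However, the way you propose to establish the rank condition has two genuine gaps. First, the reduction from a nonzero real kernel vector $c$ to a nonzero rational (hence integer) one rests on the claim that ``such $\R$-kernels are spanned by their $\Q$-rational points.'' This is false in general and, in fact, backwards in the present situation: if the coefficients $\log\abs{\tilde\sigma_i(\gamma_k)/\tilde\sigma_{d-1}(\gamma_k)}$, $k=1,\dots,s$, were $\Q$-linearly independent --- which is what a Schanuel-type argument would deliver --- then the $\R$-kernel of that functional contains \emph{no} nonzero $\Q$-rational point at all. So you cannot pass from $c\in\R^s$ to $(n_1,\dots,n_s)\in\Z^s$ this way, and the element $\mu=\prod_k\gamma_k^{n_k}$ never materialises. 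Second, and independently, you explicitly defer the final step --- ruling out a non-torsion $\mu\in K^*$ with $d-1$ conjugates of equal modulus --- as ``the hard part,'' without giving an argument. As written, the proposal does not close the proof.

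The paper takes a more direct route that avoids constructing $\mu$ altogether: from a putative nontrivial kernel vector $(x_1,\dots,x_s)$ of $\Gamma$, one writes the resulting linear system $\br{\log\abs{\gamma_k^{(i)}}}_{i,k}\,(x_1,\dots,x_s)^\top = (\lambda,\dots,\lambda)^\top$ with $\lambda = \sum_k x_k\log\abs{\gamma_k^{(d)}}\neq 0$, and observes that the solvability of this system imposes an algebraic relation among $\lambda$ and the $\log\abs{\gamma_k^{(i)}}$. This contradicts the algebraic independence of the logarithms that Schanuel grants from the $\Q$-linear independence of $\gamma_1,\dots,\gamma_s$. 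In short: rather than trying to descend a real dependence to an integer multiplicative relation, the paper converts the rank defect directly into an algebraic dependence among logarithms, where Schanuel bites immediately. If you want to salvage your version, you would need to justify the passage from $\R$-linear to $\Z$-linear dependence (which appears not to be possible here) and supply the missing final argument; switching to the paper's algebraic-dependence formulation sidesteps both difficulties.
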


            \begin{proof}

                The proof for Case 1 of Theorem 1 can be carried over par for par. We adapt the proof for Case 2, i.e. that Equation \eqref{eq:Case2} holds.
                
                Assume that the matrix $\Gamma$ does not have full rank, that there exist $x_1, \dots, x_s$, not all zero such that
                \[
                    x_1
                    \begin{pmatrix}
                        \log\abs{ \frac{ \gamma_1^{(1)} }{ \gamma_1^{(d)} } } \\ \vdots \\ \log\abs{ \frac{ \gamma_1^{(d-2)} }{ \gamma_1^{(d)} } }    
                    \end{pmatrix}
                    + \cdots +
                    x_s
                    \begin{pmatrix}
                        \log\abs{ \frac{ \gamma_{s\phantom{1}}^{(1)} }{ \gamma_{s\phantom{1}}^{(d)} } } \\ \vdots \\ \log\abs{ \frac{ \gamma_{s\phantom{1}}^{(d-2)} }{ \gamma_{s\phantom{1}}^{(d)} } }    
                    \end{pmatrix}
                    =
                    \begin{pmatrix}
                        0 \\ \vdots \\ 0
                    \end{pmatrix}.
                \]
                If we rewrite this slightly, then this means that
                \[
                    \begin{pmatrix}
                        \log\abs{\gamma_1^{(1)}} & \cdots & \log\abs{\gamma_s^{(1)}} \\
                        \vdots & \ddots & \vdots \\
                        \log\abs{\gamma_1^{(d-2)}} & \cdots & \log\abs{\gamma_s^{(d-2)}}
                    \end{pmatrix}
                    \begin{pmatrix}
                        x_1 \\ \vdots \\ x_s
                    \end{pmatrix}
                    =
                    \begin{pmatrix}
                        \lambda \\ \vdots \\ \lambda 
                    \end{pmatrix},
                \]
                where $\lambda = x_1 \log\abs{\gamma_1^{(d)}} + \dots + x_s \log\abs{\gamma_s^{(d)}} \neq 0$, since the $\gamma_i$ are multiplicatively independent. Thus $\lambda$ is a non-zero eigenvalue of the matrix on the left-hand side, which in turn means that $\lambda$ and $\log\abs{\gamma_1^{(i)}}, \dots, \log\abs{\gamma_s^{(i)}}$ are algebraically dependent. But Schanuel's conjecture asserts that if $\gamma_1, \dots, \gamma_s$ are linearly independent over $\Q$ then $\log\abs{\gamma_1}, \dots, \log\abs{\gamma_s}$ are algebraically independent over $\overline{\Q}$. This gives the contradiction and thus the full rank for the original matrix $\Gamma$.

                We can then proceed analogously to the proof of Theorem 1 and derive $t \ll 1$ and in further consequence $\abs{x}, \abs{y} \ll 1$.

            \end{proof}
  
    \printbibliography
\end{document}